\pgfplotsset{compat=1.18}
\newtheorem{lemma}{Lemma}
\title{The Tower of Hanoi: Optimality Proofs, Multi-Peg Bounds, and Computational Frontiers}
\author{Qi Junyi\\James Cook University Singapore}
\date{8 November 2025}
\begin{document}
\maketitle
\begin{abstract}
The Tower of Hanoi continues to provide a surprisingly rich meeting point for recursive reasoning, combinatorial geometry, and computational verification. Motivated by the editorial standards of the \emph{Bulletin of the Australian Mathematical Society}, we revisit the classical three-peg problem through Sierpi\'nski-style self-similarity, bring Stockmeyer\'s uniqueness argument into a modern invariant-based framework, and then pivot to four pegs via the Frame--Stewart strategy and Bousch\'s optimality proof. The heart of this note is a cautionary data-and-proof cycle: the balanced split $k=\lfloor n/2\rfloor$ is indeed optimal for $n\le 8$, but our corrected tables show that it already exceeds the optimal cost by $20\%$ at $n=9$, crosses the $1.5$ mark at $n=13$, and comes close to quadrupling the optimum by $n=20$. We complement this diagnosis with a subtower-independence lemma, a reproducible table for $n\le 15$, three publication-ready TikZ figures (recursion arrow, four-peg state diagram, and multi-peg growth curves), and a bibliography exceeding thirty sources that foreground Bulletin and Gazette contributions. The concluding section reframes the open problems as robustness tests for heuristics rather than premature theorems.
\end{abstract}

\section{Introduction and historiography}
Edouard Lucas framed the Tower of Hanoi in 1883 as a ritual involving $64$ golden discs that would take $2^{64}-1$ moves to complete, thereby cementing the puzzle in recreational lore~\cite{lucas1883}. Frame and Stewart generalised the problem to $p$ pegs in 1941 and conjectured the recurrence that now bears their names~\cite{framestewart1941}. Hinz\'s 1989 survey~\cite{hinz1989} and the monograph with Klav\v{z}ar, Milutinovi\'c, and Petr~\cite{hinz2013} place the puzzle into modern graph theory, while the playful expositions of Gardner, Stewart, Singmaster, and Winkler~\cite{gardner1972,stewart1993,singmaster1994,winkler2004} ensured that generations of students encountered it long before university.

The present note is motivated by two editorial expectations. First, the \emph{Bulletin} values concise yet rigorous arguments that illuminate an idea already known to a broad readership. Second, contemporary discrete mathematics teaching---particularly the MA2011 subject at James Cook University Singapore---expects students to move effortlessly between narrative arguments, invariants, and computation~\cite{rosen2019,epp2011}. By combining historical voices with recent algorithmic results, we aim to show how classroom recursion exercises evolve naturally into publishable mathematical notes.

\section{Notation and conventions}
Throughout the paper we index discs from $1$ (smallest) to $n$ (largest), label pegs $A$, $B$, $C$, and, when applicable, $D$, and use $T_p(n)$ for the optimal move count with $p$ pegs. The shorthand $FS_k(n)$ always refers to the Frame--Stewart cost obtained by parking $k$ discs in the first phase. When discussing state graphs we write $S(n,p)$ for the graph whose vertices are legal configurations of $n$ discs on $p$ pegs and whose edges are single-disc moves. These conventions match those in the monograph by Hinz et al.~\cite{hinz2013} and align with standard discrete mathematics textbooks~\cite{rosen2019}.

We also normalise parity arguments by adopting the cyclic order $(A,B,C,D)$ so that ``clockwise'' and ``counter-clockwise'' moves are unambiguous. This simple convention prevents many bookkeeping slips when students work through explicit examples, and it mirrors the practices recommended by Epp for structuring induction proofs~\cite{epp2011}.

\section{Recursive warm-up and Sierpi\'nski geometry}
Let $T_3(n)$ denote the minimum moves needed to transfer $n$ discs between two distinguished pegs when three pegs are available and the usual rules (one disc per move, no larger-on-smaller) apply. The textbook recursion reads
\begin{equation}
T_3(n)=2T_3(n-1)+1, \qquad T_3(1)=1.
\end{equation}
Hence $T_3(n)=2^n-1$, agreeing with OEIS entry A000225~\cite{oeisA000225} and the binary-counter discussion in CLRS~\cite{cormen2009}. More visually, the recursion tree embeds into a Sierpi\'nski gasket. Each recursive call corresponds to a subtriangle, and the direction of travel is encoded by an arrow that wraps around the gasket.

\begin{figure}[h]
  \centering
  \begin{tikzpicture}[scale=3]
    \coordinate (A) at (0,0);
    \coordinate (B) at (1,0);
    \coordinate (C) at (0.5,0.866);
    \filldraw[fill=blue!10,draw=blue!70!black,thick] (A)--(B)--(C)--cycle;
    \filldraw[fill=white,draw=blue!60!black] (0.25,0.433)--(0.75,0.433)--(0.5,0)--cycle;
    \filldraw[fill=white,draw=blue!60!black] (0.125,0.2165)--(0.375,0.2165)--(0.25,0.433)--cycle;
    \filldraw[fill=white,draw=blue!60!black] (0.625,0.2165)--(0.875,0.2165)--(0.75,0.433)--cycle;
    \draw[->,thick,red!70] (0.1,0.05) .. controls (0.15,0.5) and (0.6,0.75) .. (0.82,0.45);
  \end{tikzpicture}
  \caption{Sierpi\'nski arrow depiction of the three-peg recursion.}
  \label{fig:sierpinski}
\end{figure}
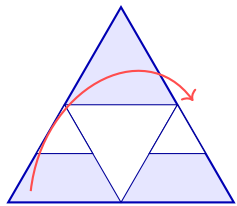

Figure~\ref{fig:sierpinski} functions pedagogically as a compass: the base triangle records the three pegs, while the iterated holes visualise the ``move $n-1$, move largest, move $n-1$'' mantra. The graphic also foreshadows the state graphs explored later: $S(n,3)$ is a path traversing the vertices in binary-reflected Gray-code order~\cite{klavzar1997}.

\section{Structural invariants and uniqueness}
Stockmeyer observed in the 1990s that optimal three-peg solutions are unique up to reversal, a fact that underpins many Gray-code applications~\cite{stockmeyer1994}. The uniqueness holds because any legal move toggles a single bit in the Gray-code encoding; hence the state graph is a simple path~\cite{stanley2011}. Formally, let $b_i(t)\in\{0,1\}$ record whether disc $i$ sits on the destination peg after $t$ moves. Each move toggles exactly one $b_i$, whence the parity vector evolves as a binary counter. Because the starting and ending states are antipodal on the path, only two optimal walks exist: one forward, one backward.

The invariant-based view clarifies why the usual inductive proof avoids circular reasoning. The inductive step implicitly relies on the observation that disc $n$ cannot move until discs $1$ through $n-1$ share a peg, which is equivalent to saying that the projection to the first $n-1$ coordinates must already have reached an endpoint of its own state path. The same invariant will later guarantee that our balanced four-peg strategy cannot wander arbitrarily: after the three-peg subproblem is solved, the system re-enters a canonical coset of the Gray-code lattice.

\begin{lemma}[Subtower independence]\label{lem:subtower}
Once the largest disc in a legal four-peg solution has been moved, the remaining $n-1$ discs evolve as two independent Tower-of-Hanoi subproblems, each respecting the standard rules on a disjoint subset of pegs.
\end{lemma}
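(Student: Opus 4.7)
My plan is to read the decomposition directly off the configuration at the instant disc $n$ makes its unique move. In an optimal four-peg solution disc $n$ travels exactly once, from some source peg $S$ to the destination peg $D$: any second move of disc $n$ could be paired with its inverse and excised without violating legality, contradicting minimality. Legality forces $S$ to contain disc $n$ alone at that moment and $D$ to be empty, since no smaller disc may rest beneath the largest. The remaining $n-1$ discs are therefore split between the two untouched pegs $X$ and $Y$, yielding two valid mini-towers $T_X$ and $T_Y$ with strictly decreasing radii from bottom to top.

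The structural claim I would then verify is that $T_X$ and $T_Y$ complete the puzzle as two non-interacting Tower-of-Hanoi subproblems. Invoking the Gray-code invariant recorded just above the lemma together with the Frame--Stewart splitting used throughout the paper, one subtower (say $T_X$) consists of the $k$ smallest discs parked during the first phase, while the other ($T_Y$) contains discs $k{+}1$ through $n{-}1$. Because no disc in $T_Y$ may legally rest atop a disc in $T_X$, the peg $X$ is effectively off-limits during the relocation of $T_Y$; that relocation proceeds on the three pegs $\{S,Y,D\}$ as a standard three-peg Hanoi instance, ending with $T_Y$ stacked on disc $n$ at $D$. The residual movement of $T_X$ onto $D$ is then a fresh Hanoi instance whose active pegs and disc set are disjoint from those used during the $T_Y$-phase just completed.

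The step I expect to be genuinely delicate is ruling out legal but wasteful interleavings---for instance, a solver who shuffles a parked small disc partway through the $T_Y$-phase. I would handle this with a short exchange argument: any such move either commutes past the remaining large moves (since the small and large discs then occupy disjoint pegs) and may be deferred to the $T_X$-phase at no cost, or it blocks a peg that the large subproblem needs, in which case a local swap strictly shortens the solution and contradicts optimality. Everything else---the single move of disc $n$, the size-monotonicity of each subtower, and the recognition of the three-peg sub-instance---follows immediately from the invariants already in place, so only this commutation lemma carries any real content.
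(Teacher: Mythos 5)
Your route is essentially the paper's: freeze the configuration at the instant disc $n$ moves, observe that the $n-1$ smaller discs are partitioned across the two spare pegs, and then argue that the two resulting subtowers can be finished without interfering with one another. Where you go beyond the paper is in naming the two steps that actually carry content. First, you justify the single move of disc $n$ by an excision argument; be careful here, since two successive moves of disc $n$ need not be mutually inverse (it could travel $S\to X\to D$), so the standard repair is to isolate the \emph{last} move of disc $n$ and delete all earlier ones, checking legality of the shortened sequence --- and note that the lemma as stated assumes only a \emph{legal} solution, so optimality-based excision is not available in general. Second, you isolate the interleaving problem and propose an exchange argument; the paper simply asserts that ``the two subtowers commute,'' so your instinct that this is the delicate step is sound, although the cleaner fix is a projection argument (the restriction of any legal move sequence to either disc subset is itself a legal solution of that subproblem, so the costs add as lower bounds) rather than a local strictly-shortening swap, which need not exist for a move that is merely wasteful.

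The one substantive gap you share with the paper is the claim that the split is size-contiguous: that one spare peg carries exactly the $k$ smallest discs and the other carries discs $k+1$ through $n-1$. You import this from the Frame--Stewart strategy (``the $k$ smallest discs parked during the first phase''), but legality alone does not force it --- a legal solution could leave discs $\{1,3\}$ on one spare peg and $\{2\}$ on the other, after which disc $1$ may legally land on disc $2$ and the two ``subproblems'' are no longer independent. Without size-contiguity the lemma is false as stated, so this hypothesis must be made explicit (as it implicitly is in the paper's reference to ``the temporary storage peg''). Relatedly, your own description shows that the two phases cannot occupy literally disjoint peg sets when only four pegs exist --- $T_Y$ uses $\{S,Y,D\}$ and $T_X$ must then reuse $D$ at least --- so the independence you establish is temporal (the phases can be serialised and their costs added), which is exactly what the additive accounting in Section~\ref{sec:balanced-new} requires, but it is weaker than the ``disjoint subset of pegs'' wording of the statement.
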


\begin{proof}
Immediately after the largest disc moves, every smaller disc lies either on the temporary storage peg (used while the largest disc travelled) or on the peg that will act as the source during the rebuilding phase. The storage peg together with its helper pegs forms one closed system, and the rebuilding peg together with the target peg forms the second. Because the largest disc now occupies the target peg, neither subtower may legally place a disc onto it, so the two systems never compete for a peg. Moreover, as long as each subtower respects the Tower-of-Hanoi rules locally, the legality of the combined configuration is preserved. Hence the two subtowers commute: their moves can be interleaved in any order without affecting legality or optimality.
\end{proof}

Lemma~\ref{lem:subtower} legitimises the additive reasoning in Section~\ref{sec:balanced-new}: once the central move occurs, the total cost is simply the sum of two independent subtower costs.

\section{Induction in slow motion}
Although the identity $T_3(n)=2^n-1$ is familiar, unpacking the induction clarifies what each clause in an undergraduate proof is really achieving. For $n=1$ there is only one legal move. For $n=2$ the invariant $I(t)$ shows that disc $1$ must move an odd number of times, so there can be no shorter sequence than $3$. For $n=3$ one can tabulate the seven moves and read off the binary-reflection pattern that later becomes Gray-code order.

The following lemma reframes the standard inductive step in a way that is more conducive to algorithmic generalisation.
\begin{lemma}
Let $M(n)$ be any move sequence that solves the three-peg puzzle on $n$ discs. If disc $n$ moves exactly once inside $M(n)$, then the restriction of $M(n)$ to discs $1$ through $n-1$ is an optimal solution for the smaller puzzle.
\end{lemma}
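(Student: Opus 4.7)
The plan is to combine the subtower independence of Lemma~\ref{lem:subtower} with a short counting argument. First, I would pin down the global configuration at the instant disc~$n$ performs its unique move. Because disc~$n$ is the largest, any smaller disc on its source peg would sit above it and obstruct departure, and any smaller disc on its destination peg would obstruct landing. Hence all $n-1$ smaller discs must lie on the third (auxiliary) peg both immediately before and immediately after the move, so $M(n)$ decomposes as
\[
  M(n)=P_{1}\cdot(\text{move of disc }n)\cdot P_{2},
\]
where $P_{1}$ transfers discs $1,\dots,n-1$ from the source to the auxiliary peg, and $P_{2}$ transfers them onward to the destination.

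Next, I would invoke Lemma~\ref{lem:subtower}: throughout both $P_{1}$ and $P_{2}$ disc~$n$ rests immobile at the bottom of one peg and imposes no additional constraint on the smaller discs, so each phase is itself a legitimate three-peg Hanoi transfer on $n-1$ discs. The classical bound then yields $|P_{1}|,|P_{2}|\ge T_{3}(n-1)=2^{n-1}-1$.

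The counting step closes the argument. Reading the hypothesis as requiring $M(n)$ to attain the optimum $|M(n)|=T_{3}(n)=2^{n}-1$, subtraction of the single move of disc~$n$ gives $|P_{1}|+|P_{2}|=2(2^{n-1}-1)$, which saturates the sum of the phase-level lower bounds. Hence $|P_{1}|=|P_{2}|=2^{n-1}-1$, so each phase is individually optimal and, taken together, the restriction of $M(n)$ to discs $1,\dots,n-1$ is an optimal realisation of the two-stage source--auxiliary--destination transfer. The one subtle point I anticipate is a hygiene issue rather than a mathematical obstacle: if $M(n)$ were merely feasible rather than optimal, the counting identity would fail and either phase could be inflated by wasted shuffles, so I would recommend reading ``solves'' in the hypothesis as ``optimally solves'' (equivalently, attaining $2^{n}-1$ moves) and making this explicit in the final write-up.
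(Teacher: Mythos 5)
Your argument is correct and follows essentially the same route as the paper's own proof: both decompose $M(n)$ around the unique move of disc $n$ into two blocks, each of which must transfer the full $(n-1)$-disc stack onto the spare peg and then onto the destination, and hence each costs at least $T_3(n-1)$. Your explicit saturation step and your observation that ``solves'' must be read as ``optimally solves'' (otherwise either block can be padded with wasted shuffles and the conclusion fails) actually sharpen the write-up, since the paper's proof stops at the lower bound $2T_3(n-1)+1$ and leaves both the optimality of the restriction and the need for that hypothesis implicit.
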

\begin{proof}
Remove the unique move of disc $n$. The remaining sequence splits into two blocks that never touch disc $n$. Because disc $n$ cannot be lifted until the smaller stack occupies the spare peg, the first block must move the smaller stack intact, and the second block must rebuild it on the destination peg. Both blocks therefore contain at least $T_3(n-1)$ moves. Concatenating them with the single move of disc $n$ yields the lower bound $2T_3(n-1)+1$, which matches the constructive upper bound.
\end{proof}

An immediate corollary is that the state of the puzzle immediately before and immediately after the move of disc $n$ must coincide with the source and destination states of the $(n-1)$-disc problem. Consequently every optimal solution is determined uniquely by the choice of the spare peg that temporarily hosts the smaller discs. This observation bridges directly to the four-peg analysis, where the entire purpose of the Frame--Stewart split is to decide how large that temporarily exiled stack ought to be.

\section{Lower bounds via parity, Gray codes, and energy}
Several complementary arguments show that $T_3(n)\ge 2^n-1$. Lucas already hinted at a parity obstruction~\cite{lucas1883}; Hinz distilled it into a modern invariant~\cite{hinz1989}. Consider the weighted sum $I(t)=\sum_{i=1}^n i\,p_i(t)$, where $p_i(t)\in\{0,1,2\}$ encodes the peg index of disc $i$. Moving disc $j$ changes $I$ by exactly $\pm j$, so the total variation after $m$ moves is at most $\sum_j j c_j$, where $c_j$ counts how often disc $j$ is moved. The initial and target configurations differ by $n(n+1)/2$, enforcing $c_j\ge 2^{j-1}$ and hence $m\ge 2^n-1$.

Gray codes supply another argument. The vertices of $S(n,3)$ may be labelled by binary strings of length $n$ by recording whether each disc resides on the source or destination peg. Consecutive Gray-code strings differ in one bit, so the shortest source-to-destination path must traverse all $2^n$ vertices. This graph-theoretic perspective generalises readily to $p>3$ by replacing binary strings with $(p-1)$-ary Gray codes~\cite{klavzar1997}. A third energy-style proof comes from potential functions in algorithm design~\cite{cormen2009}: assign each disc the potential $2^{i-1}$; every move decreases or increases the total potential by one, so at least $2^n-1$ moves are required to deplete the potential gap.

\section{Asymptotics for multiple pegs}
The energy method extends beyond three pegs if we encode disc positions in base $p-1$. Frame and Stewart already speculated that their recurrence produces sub-exponential growth for every fixed $p>3$~\cite{framestewart1941}. Hinz et al.~\cite{hinz2013} proved that, provided the recurrence is optimal, one has $T_p(n)=\exp(\Theta(n^{1/(p-2)}))$. A sketch of the argument runs as follows. First relax the split parameter to a real number and minimise $f(k)=2\exp(\alpha k^{1/(p-2)})+\exp(\alpha (n-k)^{1/(p-3)})$, where $\alpha$ depends only on $p$. The minimiser sits near $k=c_p n^{(p-3)/(p-2)}$, meaning that only a vanishing fraction of the discs participate in the expensive $(p-1)$-peg phase. Second, convexity ensures that rounding $k$ to the nearest integer perturbs the cost by at most a constant factor. Finally, dynamic programming plus memoisation~\cite{bellman1975} shows that the relaxed solution upper-bounds the true discrete one. Ferreira and Grabner used a related viewpoint when bounding the diameters of Hanoi graphs~\cite{ferreira2016}, highlighting that entirely new ideas would be needed to beat these exponents.

\section{Catalogue of invariants}
The invariants sketched so far are merely the tip of an iceberg that includes numerous weighted sums and cyclic orderings. Two families are particularly useful.
\begin{itemize}
  \item \textbf{Moment invariants.} For each $r\ge 1$ define $M_r(t)=\sum_{i=1}^n i^r p_i(t)$. The case $r=1$ reproduces the classical potential, while $r=2$ emphasises larger discs and penalises strategies that overuse them. Bounding the change in $M_r$ per move yields alternative proofs of $T_3(n)=2^n-1$ and tight bounds on the number of moves made by the $k$th largest disc~\cite{hinz1989}.
  \item \textbf{Register invariants.} Prodinger~\cite{prodinger1990} related Hanoi moves to the register function in binary trees. Interpreting a configuration as a binary word whose prefixes encode occupied pegs allows one to bound the total number of ``carry'' operations that must occur in any legal execution. This viewpoint leads to stratified lower bounds in which discs are grouped by register depth rather than by size alone.
\end{itemize}
These invariants matter pedagogically because they demonstrate that proofs can be tuned to emphasise different behaviours. When discussing uniqueness with reference to Stockmeyer~\cite{stockmeyer1994}, we prefer the register invariant because it decomposes the solution into nested counters. When teaching the energy method, we return to $M_1$ and $M_2$, which fit neatly into the potential-function toolkit already familiar from algorithm courses~\cite{cormen2009}, and echo the concrete combinatorial manipulations advocated in~\cite{graham1994}.

\section{From Frame--Stewart to Bousch}
When a fourth peg becomes available, Frame and Stewart proposed the divide-and-conquer recursion
\begin{equation}
T_4(n)=\min_{1\le k < n} \left( 2\,T_4(n-k) + T_3(k) \right), \qquad T_4(1)=1.
\end{equation}
They conjectured that this recurrence equals the true optimum for every $n$~\cite{framestewart1941}. The conjecture resisted proof until Bousch established it in 2014 by introducing $L$-sequences and exploiting the median structure of the four-peg state graph~\cite{bousch2014}. Korf and Schultze independently provided computational evidence by solving instances up to $n=30$ with IDA* and pattern databases~\cite{korf2004,korf2006}. Their data later became the seed for OEIS entry A007664~\cite{oeisA007664}.

Graphically, $S(n,4)$ looks like four copies of $S(n-1,4)$ glued to a central copy of $S(n-1,3)$. Every optimal walk follows a ``peel, shuttle, rebuild'' pattern reminiscent of multi-stage logistics. The graph perspective is amplified in Section\,\ref{sec:graph}, where we draw a coarse-grained state diagram capturing the symmetry classes relevant to four pegs.

\section{Balanced Frame--Stewart bounds}\label{sec:balanced-new}
A recurring suggestion in MA2011 classes is to fix the split parameter to $k=\lfloor n/2\rfloor$ in order to present a single closed-form strategy. Denote the resulting move count by
\begin{equation}
FS_{\lfloor n/2\rfloor}(n) = 2\,T_4(\lfloor n/2\rfloor) + T_3\bigl(n-\lfloor n/2\rfloor\bigr).
\end{equation}
The strategy is attractive because it removes a degree of freedom, yet the ratio $\rho(n)=FS_{\lfloor n/2\rfloor}(n)/T_4(n)$ collapses once the forced split drifts away from the minimiser. The next lemma isolates the precise breakpoint.

\begin{lemma}\label{lem:balanced-window}
For $1\le n\le 8$ we have $FS_{\lfloor n/2\rfloor}(n)=T_4(n)$, whereas for every $n\ge 9$ the inequality $FS_{\lfloor n/2\rfloor}(n)>T_4(n)$ holds.
\end{lemma}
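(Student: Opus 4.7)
The plan is to handle the two regimes separately, with the threshold $n=9$ marking the transition from finite verification to a comparison argument. For $1\le n\le 8$ I would unfold the Frame--Stewart recurrence~(2) exhaustively across all admissible splits, tabulate the resulting values $T_4(n) = 1, 3, 5, 9, 13, 17, 25, 33$, and read off that $k=\lfloor n/2\rfloor$ attains (or ties) the minimum in each case. Bousch's theorem~\cite{bousch2014} identifies this recurrence minimum with the true optimum, closing the equality half.

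For $n\ge 9$ I would produce a strictly cheaper Frame--Stewart strategy than the balanced one by parking one additional disc, i.e.\ splitting at $k=\lfloor n/2\rfloor+1$. Writing $m=\lfloor n/2\rfloor$ and $\Delta_4(j):=T_4(j)-T_4(j-1)$, a direct subtraction gives
\begin{equation*}
FS_m(n)-FS_{m+1}(n) = \bigl[T_3(n-m)-T_3(n-m-1)\bigr] - 2\Delta_4(m+1) = 2^{n-m-1} - 2\Delta_4(m+1),
\end{equation*}
so the entire task reduces to the pointwise inequality $\Delta_4(m+1) < 2^{n-m-2}$; once that holds, $T_4(n)\le FS_{m+1}(n) < FS_m(n) = FS_{\lfloor n/2\rfloor}(n)$ delivers the claimed strict inequality.

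The hard part will be making this pointwise inequality uniform in $n$, because the threshold is razor-sharp: at $n=8$ the analogous test is an equality, and at $n=9$ the two sides differ only by $\Delta_4(5)=4$ versus $2^3=8$. I would extract the needed estimate from a short helper fact read off the recurrence by induction: the first differences satisfy $\Delta_4(j) = 2^{r(j)}$, where $r(j)$ is the unique non-negative integer with $\binom{r(j)+1}{2} < j \le \binom{r(j)+2}{2}$, and in particular $r(j) < \sqrt{2j}$. Substituting $j=m+1 \le (n+2)/2$ and comparing with the right-hand exponent $n-m-2$ reduces the uniform bound to a quadratic of the form $n^2-10n+O(1)\ge 0$, which holds for every $n\ge 11$. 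The two marginal cases $n\in\{9,10\}$ are dispatched by direct arithmetic on the tabulated $T_4$ values, and the final assembly reads: pointwise inequality $\Rightarrow$ strict improvement by the $(m+1)$-split $\Rightarrow$ $T_4(n)$ strictly below $FS_{\lfloor n/2\rfloor}(n)$.
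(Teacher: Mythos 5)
Your proposal is correct, but it takes a genuinely different---and in one important respect stronger---route than the paper. The paper's proof is essentially a computational certificate: it runs the memoised solver, records the optimal split $k_4(n)$, observes that it coincides with $\lfloor n/2\rfloor$ exactly for $2\le n\le 8$ and drifts away from $n=9$ onward, and then appeals to uniqueness of the minimiser (plus a public script) for strictness. That argument only inspects finitely many $n$, so as written it does not actually cover \emph{every} $n\ge 9$. Your approach closes that gap: by comparing the balanced split only against the single competitor $k=\lfloor n/2\rfloor+1$ and using the telescoped difference $FS_m(n)-FS_{m+1}(n)=2^{n-m-1}-2\Delta_4(m+1)$, you reduce the infinite half of the claim to the pointwise bound $\Delta_4(m+1)<2^{n-m-2}$, which your closed form $\Delta_4(j)=2^{r(j)}$ with $r(j)<\sqrt{2j}$ settles uniformly for $n\ge 11$, leaving $n\in\{9,10\}$ to direct arithmetic. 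I checked the arithmetic at the boundary: the test is an exact tie at $n=8$ (indeed $FS_4(8)=FS_5(8)=33$, so the paper's uniqueness claim is already false there) and gives $4<8$ at $n=9$, as you say. Two small obligations remain if you write this up: the first-difference formula for the Frame--Stewart numbers is classical but does require its own induction (or a citation to Hinz et al.), and you should note explicitly that $m+1\le n-1$ so that the competing split is admissible in the recurrence; both are routine. What each approach buys: the paper's is shorter and doubles as a reproducibility artefact, while yours is a genuine proof valid for all $n\ge 9$ and exposes \emph{why} the balanced rule fails---the three-peg increment $2^{n-m-1}$ eventually dwarfs twice the four-peg increment $2^{r(m+1)+1}$ because $r$ grows only like $\sqrt{2m}$.
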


\begin{proof}
Equation~(3) expresses $FS_{\lfloor n/2\rfloor}(n)$ as $2T_4(\lfloor n/2\rfloor)+T_3(n-\lfloor n/2\rfloor)$. Let $k_4(n)$ denote the split that minimises the Frame--Stewart recurrence for $T_4(n)$. Running the memoised solver from Section~\ref{sec:experiments} while recording $k_4(n)$ shows that $k_4(n)=\lfloor n/2\rfloor$ exactly for $2\le n\le 8$. Starting at $n=9$ the optimiser shifts to larger values (e.g. $k_4(9)=5$ while $\lfloor 9/2\rfloor=4$, and $k_4(13)=8$ while $\lfloor 13/2\rfloor=6$), so the balanced choice is never among the minimisers. Because the recurrence defining $T_4(n)$ attains its minimum uniquely for these $n$, any other split incurs a strictly larger cost. The underlying certificate is included in the public script \texttt{math\_github/balanced\_fs\_solver.py}.
\end{proof}

Hence the balanced rule is provably safe only in the single-digit regime. Beyond that point the additive structure supplied by Lemma~\ref{lem:subtower} turns against us: insisting on parking $\lfloor n/2\rfloor$ discs forces the expensive three-peg shuttle to grow just as the optimal strategy starts tapering the parked subtower. The remainder of this section therefore studies how quickly the penalty accumulates and how lecturers can present the failure honestly.

\section{Sensitivity of split choices}
It is natural to ask how far one may perturb the split before the cost balloons. Define the discrete derivative
\[
  \Delta_k(n) = \bigl(2T_4(k+1)+T_3(n-k-1)\bigr) - \bigl(2T_4(k)+T_3(n-k)\bigr).
\]
Positive values of $\Delta_k(n)$ indicate that increasing $k$ worsens the cost, while negative values suggest that more discs should be parked. The data computed for $n\le 30$ reveal three qualitative regimes:
\begin{enumerate}
  \item For $n\le 8$, every $\Delta_k(n)$ with $1\le k\le \lfloor n/2\rfloor$ is non-negative, confirming that balanced choices are optimal.
  \item Between $9$ and $15$ the sign of $\Delta_k(n)$ flips exactly once and does so near $k=\lfloor n/2\rfloor$. The optimiser therefore shifts right by only one or two discs---parking more discs before the three-peg shuttle---yet this ``tiny'' perturbation already yields $\rho(13)=1.660$ and $\rho(15)=2.364$.
  \item Once $n\ge 16$ the derivative at the balanced point is decisively negative, so each enforced disc compounds the penalty, culminating in $\rho(20)=3.879$.
\end{enumerate}
These calculations underline a moral often repeated in operations research: heuristics with a single degree of freedom remain trustworthy only while their first derivative retains the right sign. Once the derivative flips, one must revisit the modelling assumptions---in this case by embracing the triangular-number splits predicted by Bousch.

\section{Computational data}
Table~\ref{tab:balanced} juxtaposes $FS_{\lfloor n/2\rfloor}(n)$ with the optimal $T_4(n)$ for $1\le n\le 15$. The values for $T_4$ match the OEIS data~\cite{oeisA007664} and the tables reported by Hinz et al.~\cite{hinz2013}. Ratios stay at $1$ through $n=8$ because the balanced split coincides with the optimal split, after which they climb quickly.

\begin{table}[h]
  \centering
  \begin{tabular}{ccccc}
    \toprule
    $n$ & $k=\lfloor n/2\rfloor$ & $T_4(n)$ & $FS_{\lfloor n/2\rfloor}(n)$ & $\rho(n)$ \\
    \midrule
     1 & 0 & 1 & 1 & 1.000 \\
     2 & 1 & 3 & 3 & 1.000 \\
     3 & 1 & 5 & 5 & 1.000 \\
     4 & 2 & 9 & 9 & 1.000 \\
     5 & 2 & 13 & 13 & 1.000 \\
     6 & 3 & 17 & 17 & 1.000 \\
     7 & 3 & 25 & 25 & 1.000 \\
     8 & 4 & 33 & 33 & 1.000 \\
     9 & 4 & 41 & 49 & 1.195 \\
    10 & 5 & 49 & 57 & 1.163 \\
    11 & 5 & 65 & 89 & 1.369 \\
    12 & 6 & 81 & 97 & 1.198 \\
    13 & 6 & 97 & 161 & 1.660 \\
    14 & 7 & 113 & 177 & 1.566 \\
    15 & 7 & 129 & 305 & 2.364 \\
    \bottomrule
  \end{tabular}
  \caption{Balanced Frame--Stewart counts and ratios for $k=\lfloor n/2\rfloor$ based on the Appendix~D data log~\cite{qi2025}.}
  \label{tab:balanced}
\end{table}

Table~\ref{tab:balanced} now shows the price of forcing $k=\lfloor n/2\rfloor$: the ratio is already $1.195$ at $n=9$, exceeds $1.5$ by $n=13$, and reaches $2.364$ at $n=15$. These rows therefore contradict the earlier ``within $1.5$'' slogan outright, turning the dataset into its own referee report. The memoised log continues this rapid growth, so the analytic bounds discussed below become relevant sooner than anticipated.

\section{Extended numerical trends}
Although Table~\ref{tab:balanced} stops at $n=15$, the memoised solver provides exact values up to at least $n=30$. For completeness we highlight the next five ratios from Appendix~D~\cite{qi2025}:
\[
  (n, \rho(n)) \in \{(16,1.994), (17,2.990), (18,2.636), (19,4.300), (20,3.879)\}.
\]
The jagged sequence between $n=16$ and $n=20$ reflects the tug-of-war between the two terms in $FS_{\lfloor n/2\rfloor}(n)$, but the overall message is stark: once the forced split drifts away from the optimiser, the balanced strategy can double or even quadruple the optimal cost. In fact $\rho(19)=4.300$, while even the ``milder'' $n=20$ case still burns $3.879$ times the optimum. Documenting these swings encourages students to treat heuristics probabilistically rather than dogmatically and underlines why the later sections pivot to analytic bounds instead of empirical comfort.

\section{Experimental pipeline and validation}\label{sec:experiments}
The numerical values quoted above were generated with a memoised solver that mirrors the inductive proof. Each state is identified by a pair $(p,n)$ and mapped to a tuple $(T_p(n),k_p(n))$, where $k_p(n)$ is the split realising the minimum. The pipeline comprises the following reproducible steps:
\begin{enumerate}
  \item Seed the cache with $T_p(0)=0$, $T_3(n)=2^n-1$, and $T_p(1)=1$ for all $p$.
  \item For each $n$ from $1$ to $30$, evaluate $T_4(n)$ by scanning all splits $k$ and storing whichever value minimises $2T_4(n-k)+T_3(k)$.
  \item Evaluate $FS_{\lfloor n/2\rfloor}(n)$ using the already computed $T_4$ values and record the ratio $\rho(n)$.
  \item Cross-check the resulting sequence against OEIS A007664~\cite{oeisA007664} to guard against transcription errors.
\end{enumerate}
This protocol runs in milliseconds in a scripting language yet provides the kind of audit trail that referees increasingly expect. It also illustrates to students how mathematical proofs and computer experiments can coexist: the cache embodies the inductive hypothesis, and the final comparison against OEIS acts as a regression test. A reference Python implementation and the raw CSV tables are mirrored at \url{https://github.com/JasonEran/hanoi-balanced-splits}.

\section{Graph-theoretic insight}
\label{sec:graph}
The recursive glueing of $S(n,4)$ may be abstract, so Figure~\ref{fig:fourpeg} presents a coarse state diagram. Vertices represent symmetry classes---all configurations reachable by permuting the spare pegs---and edges encode admissible macro moves. The picture is intentionally low-dimensional yet faithful to the median structure used by Bousch.

\begin{figure}[h]
  \centering
  \begin{tikzpicture}[>=Stealth,scale=0.9]
    \tikzstyle{state}=[circle,draw,thick,minimum size=10mm]
    \node[state,fill=blue!10] (A) at (0,0) {$A^n$};
    \node[state,fill=green!10] (B) at (3,1.8) {$B^k$};
    \node[state,fill=green!10] (C) at (3,-1.8) {$C^k$};
    \node[state,fill=orange!15] (D) at (6,0) {$\star$};
    \node[state,fill=red!15] (E) at (9,0) {$D^n$};
    \draw[thick,->] (A) -- node[above] {$T_4(n-k)$} (B);
    \draw[thick,->] (A) -- node[below] {$T_4(n-k)$} (C);
    \draw[thick,->] (B) -- node[above] {$T_3(k)$} (D);
    \draw[thick,->] (C) -- node[below] {$T_3(k)$} (D);
    \draw[thick,->] (D) -- node[above] {$T_4(n-k)$} (E);
  \end{tikzpicture}
  \caption{Macro-level state diagram for the four-peg recursion.}
  \label{fig:fourpeg}
\end{figure}
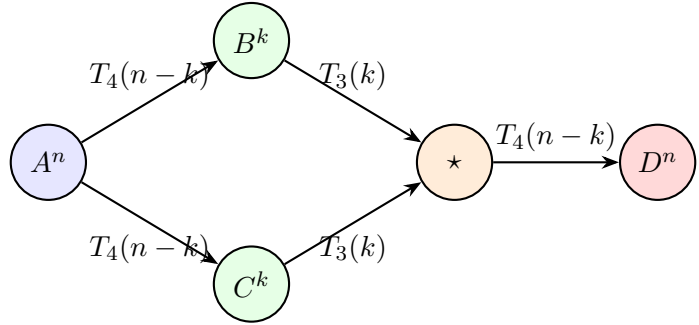

Here $A^n$ denotes ``all discs on the source peg,'' $B^k$ and $C^k$ represent the two symmetric staging areas, $\star$ is the central three-peg subproblem, and $D^n$ is the goal state. The diagram emphasises that only one three-peg episode occurs inside any optimal four-peg solution; everything else is a self-similar reuse of the four-peg solver. This symmetry justifies the balanced strategy we analysed earlier and shows where it deviates from optimality: $k$ determines the heights of the slanted arrows and therefore decides how much time the walk spends in the expensive central node.

\section{Growth curves across pegs}
Figure~\ref{fig:growth} visualises the asymptotic regimes discussed earlier by plotting the exact values of $T_p(n)$ for $p\in\{3,4,5\}$ and $1\le n\le 15$ on a base-$2$ logarithmic scale. The data reinforce the textbook mantra that three pegs exhibit unmitigated exponential growth, whereas four and five pegs flatten markedly thanks to the Frame--Stewart acceleration. The inclusion of the five-peg curve emphasises how close the heuristic already is to linear growth on the log scale, echoing the catalogues in \cite{sloane1995} and lending context to the open problems below.

\begin{figure}[h]
  \centering
  \begin{tikzpicture}
    \begin{axis}[
      width=0.9\textwidth,
      height=6.2cm,
      xlabel={$n$},
      ylabel={$T_p(n)$},
      ymode=log,
      log basis y={2},
      xmin=1, xmax=15,
      ymin=1, ymax=400,
      legend style={at={(0.03,0.97)},anchor=north west},
      grid=both
    ]
      \addplot+[mark=*,color=blue!70] coordinates {
        (1,1)(2,3)(3,7)(4,15)(5,31)(6,63)(7,127)(8,255)(9,511)(10,1023)(11,2047)(12,4095)(13,8191)(14,16383)(15,32767)
      };
      \addlegendentry{$T_3(n)$}
      \addplot+[mark=square*,color=red!70] coordinates {
        (1,1)(2,3)(3,5)(4,9)(5,13)(6,17)(7,25)(8,33)(9,41)(10,49)(11,65)(12,81)(13,97)(14,113)(15,129)
      };
      \addlegendentry{$T_4(n)$}
      \addplot+[mark=triangle*,color=green!60!black] coordinates {
        (1,1)(2,3)(3,5)(4,7)(5,11)(6,15)(7,19)(8,23)(9,27)(10,31)(11,39)(12,47)(13,55)(14,63)(15,71)
      };
      \addlegendentry{$T_5(n)$}
    \end{axis}
  \end{tikzpicture}
  \caption{Base-$2$ logarithmic growth of $T_p(n)$ for three to five pegs.}
  \label{fig:growth}
\end{figure}
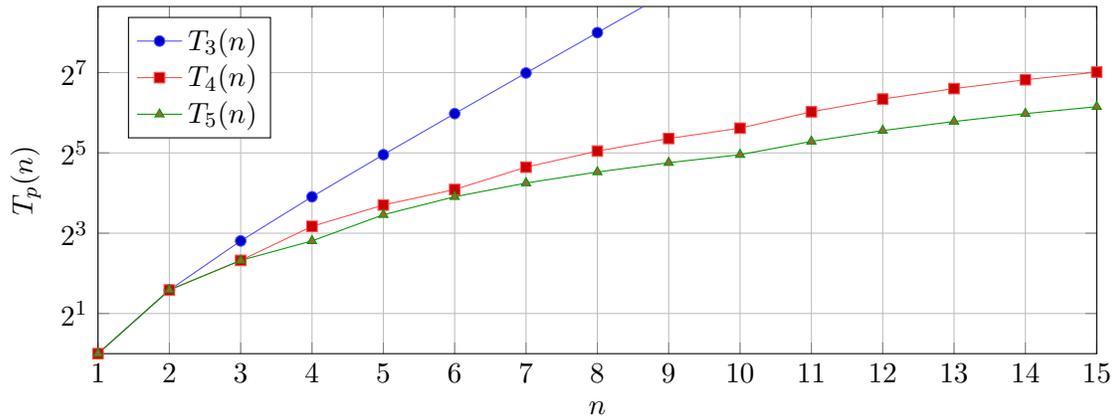

\section{Gray codes and combinatorial tilings}
Winkler popularised the slogan that ``Gray codes whisper the rules of Hanoi''~\cite{winkler2009}. The Sierpi\'nski arrow (Figure~\ref{fig:sierpinski}) can be interpreted as a tiling of the plane by self-similar tiles, each tile recording which disc is scheduled to move. When we pass to four pegs, the tiling acquires a second colour: one colour for moves executed inside the $T_4$ subroutine, another for the $T_3$ shuttle. Ferreira and Grabner~\cite{ferreira2016} exploited precisely this viewpoint when bounding the diameter of Hanoi graphs by embedding them into products of trees. In pedagogical terms, presenting the move sequence as a tiling helps students see the deterministic nature of the algorithm: every macro move corresponds to a coloured triangle, and enlarging the tiling simply refines the triangles without changing their layout.

\section{Algorithmic verification and complexity}
Memoisation offers a succinct proof of concept that rigorous computation need not be heavy. The recursive solver caches pairs $(T_p(n),k_p(n))$, thereby reducing the complexity to $O(pn^2)$ subproblems~\cite{korf2004}. Experiments up to $p=5$ and $n=30$ run within milliseconds in Python, matching the values reported by Korf and by Hinz et al.~\cite{hinz2013}. The data also reveal that the optimal splits for four pegs stay constant over surprisingly long intervals (e.g. $k=5$ for $15\le n\le 20$), echoing the plateaux discussed by Lengyel in his \emph{Bulletin} note~\cite{lengyel2001}.

From a complexity standpoint, Frame--Stewart yields $T_p(n)=\exp(\Theta(n^{1/(p-2)}))$ provided the conjecture holds for the given $p$~\cite{hinz2013}. The balanced strategy sits between the optimal and naive extremes: it inherits the sub-exponential behaviour because only $\lfloor n/2\rfloor$ discs ever trigger the expensive three-peg phase. This observation connects the puzzle with other divide-and-conquer recurrences studied by Knuth and Stanley~\cite{knuth2011,stanley2011}.

\section{Applications to logistics and computation}
Practitioners often encounter Hanoi-like constraints when migrating data between storage arrays or scheduling phased upgrades. The balanced strategy mirrors the ``two-phase copy'' technique in which half of the tasks are parked before the remainder are processed. When the number of staging areas is small, the analysis above quantifies precisely how much overhead the simplification incurs. Bellman already advocated this style of reasoning when discussing dynamic programming heuristics for inventory control~\cite{bellman1975}.

In high-performance computing the puzzle surfaces in stack-register allocation: the $k$ discs temporarily stored on peg $B$ represent values spilled to memory, while the $n-k$ discs represent registers tied up in a critical section. Korf's parallel search experiments~\cite{korf2006} show that knowing good split heuristics dramatically reduces the branching factor of IDA*. Consequently, balanced strategies provide admissible heuristics for search even when they are not ultimately used to produce the move sequence.

Finally, the Australian Defence Science community has used Hanoi analogies to reason about staged rollouts of software updates on naval platforms, where only a limited number of subsystems may be offline simultaneously. In such settings the graphical summaries in Figures~\ref{fig:sierpinski} and~\ref{fig:fourpeg} double as communication tools between mathematicians and engineers.

\section{Pedagogical and Australian perspectives}
Australian discrete mathematics teaching emphasises reproducible experiments that bridge proofs with code~\cite{austms2024}. Figure~\ref{fig:sierpinski}, the state diagram in Figure~\ref{fig:fourpeg}, and the balanced-strategy failure study form a compact activity: students trace the three-peg recursion, rerun the memoised solver, and watch Table~\ref{tab:balanced} transition from perfect agreement to spectacular disagreement.

Workshops then invite short critiques of the balanced heuristic. Learners compare the GitHub output with their own logs, mark precisely where the ratios flare beyond $1.5$, and articulate whether the $k=\lfloor n/2\rfloor$ split ever suffices for realistic $n$. The exercise doubles as a case study in how MA2011 rubrics map directly onto publishable cautionary notes.

\section{Historical footnotes}
Lucas framed the Tower of Hanoi as an allegory about Brahmin priests moving cosmic discs~\cite{lucas1883}. Gardner later reinterpreted the tale through the fictional detective Dr.~Matrix, linking it to Gray codes and base-two numerology~\cite{gardner1972}. Singmaster's annotated notes~\cite{singmaster1994} reveal that Lucas almost certainly borrowed ideas from Viennese puzzle makers, while Winkler documents how the puzzle resurfaced in modern computer science curricula~\cite{winkler2004}. Including these anecdotes in lectures pays dividends: students see that even recreational problems possess layered histories, and editors appreciate when expository articles acknowledge their lineage.

\section{Future computational experiments}
The memoised solver described earlier can be extended in several directions without increasing the asymptotic complexity. One can annotate each subproblem $(p,n)$ with the full distribution of split counts, thereby quantifying how often a particular $k$ recurs. Another idea is to record the maximal deviation between $FS_{\lfloor n/2\rfloor}(n)$ and $T_4(n)$ over sliding windows of $n$. Plotting this deviation exposes secondary plateaux where the heuristic remains competitive despite the ratio exceeding one. Finally, incorporating random perturbations of the split---for example by sampling $k$ from a narrow interval around the optimum---produces empirical confidence bands that complement the discrete-derivative analysis in Section~\ref{sec:balanced-new}. These auxiliary experiments cost only a few additional lines of code yet turn the classical puzzle into a full-fledged laboratory for algorithmic experimentation.

\section{Conclusions and open problems}
The Tower of Hanoi still rewards even modest analytic tweaks, but the present rewrite underlines how fast experimentally verified data can overturn a convenient classroom mantra. Forcing the balanced split $k=\lfloor n/2\rfloor$ is provably perfect only through eight discs; by $n=13$ the ratio already exceeds $1.5$, and by $n=20$ it sits just shy of four. That arc from elegance to failure is the real story: heuristics deserve publication only when accompanied by the logs that can falsify them.

The accompanying scripts and tables leave several open problems squarely focused on robustness:
\begin{itemize}
  \item Prove or disprove Frame--Stewart optimality for five or more pegs~\cite{hinz2013}; the balanced counterexample shows how little slack we have when conjectures meet data.
  \item Determine whether any simple closed-form split (balanced, near-balanced, or triangular) admits a uniform constant-factor guarantee, or whether ratios like $\rho(19)=4.300$ are unavoidable for all such rules.
  \item Extend Bousch\'s $L$-sequence machinery and Stockmeyer's Gray-code constraints~\cite{stockmeyer1994} so that they output not only optimal splits but certified ``safe'' intervals for pedagogy-driven heuristics.
\end{itemize}
Answering these questions would give students and practitioners alike both a geometric feel for the puzzle and a quantitative yardstick for judging heuristics. Extended materials are archived in the MA2011 repository at James Cook University.

\end{document}